\documentclass[11pt]{amsart}

\usepackage[T1]{fontenc}
\usepackage{lmodern}
\usepackage{microtype}
\usepackage{needspace}
\usepackage{amsmath,amssymb,amsthm,mathtools}
\usepackage[colorlinks=true,linkcolor=blue,citecolor=blue,urlcolor=blue]{hyperref}
\usepackage[nameinlink,noabbrev]{cleveref}
\hypersetup{
  pdftitle={Endpoint Riesz transforms on normalized graphs},
  pdfauthor={Author Name},
  pdfsubject={Weak-type estimates for edge and vertex Riesz transforms},
  pdfkeywords={normalized graph Laplacian, Riesz transform, obstacle decomposition}
}
\allowdisplaybreaks[2]

\newtheorem{theorem}{Theorem}[section]
\newtheorem{proposition}[theorem]{Proposition}
\newtheorem{lemma}[theorem]{Lemma}
\newtheorem{corollary}[theorem]{Corollary}
\crefname{theorem}{Theorem}{Theorems}
\crefname{proposition}{Proposition}{Propositions}
\crefname{lemma}{Lemma}{Lemmas}
\crefname{corollary}{Corollary}{Corollaries}

\newcommand{\norm}[1]{\lVert #1\rVert}

\newcommand{\cH}{\mathcal H}
\newcommand{\cR}{\mathcal R}

\title[Endpoint Riesz transforms on normalized graphs]
{Endpoint Riesz transforms on normalized graphs}

\author{Zihao Wang}
\address{School of Mathematical Sciences, Fudan University, Shanghai 200433, China}
\email{wangzh25@m.fudan.edu.cn}
\date{September 23, 2026}

\subjclass[2020]{Primary 42B20; Secondary 05C81, 47A60, 60J10}
\keywords{Riesz transform, normalized graph Laplacian, weak type $(1,1)$,
obstacle decomposition, edge gradient, vertex gradient}

\begin{document}

\begin{abstract}
For an infinite connected locally finite undirected graph with positive
symmetric edge weights and normalized
Laplacian $\Delta$, we prove weak type $(1,1)$ estimates for two Riesz
transforms. The edge transform has universal constant $2$, with no volume
assumption. The vertex carr\'e-du-champ transform has constant $H_1+1$
whenever $H_1=\sup_x m(B(x,1))/m(x)<\infty$. These endpoint constants apply
to real-valued inputs and yield strong $\ell^p$ bounds for $1<p\le2$.
The proof uses a discrete obstacle decomposition obtained by a contraction
mapping, followed by elementary support estimates for the two gradients.
\end{abstract}

\maketitle

\section{Introduction}

Riesz transforms connect first-order differential estimates with the
functional calculus of a nonnegative Laplacian. Their $L^p$ theory on
manifolds has been developed through heat-kernel estimates and
Littlewood--Paley methods; see
\cite{Stein1970,CoulhonDuong1999,CoulhonDuong2003,
CoulhonDuongLi2003,AuscherCoulhonDuongHofmann2004}.
Sub-Gaussian analogues and quasi-Riesz transforms were studied in
\cite{Chen2015,ChenCoulhonFeneuilRuss2017}.

On graphs, the work of Russ \cite{Russ2000,Russ2001}, Dungey
\cite{Dungey2004,Dungey2008}, and Badr--Russ \cite{BadrRuss2009}
established Riesz-transform, Hardy-space, and square-function estimates
in several geometric settings. Heat-kernel bounds for reversible walks
and their relation to volume growth and Poincar\'e inequalities form an
important part of this theory; see
\cite{HebischSaloffCoste1993,Delmotte1999}. Background on normalized
Laplacians, random walks, and graph Dirichlet forms can be found in
\cite{Chung1997,Woess2000,KellerLenz2012}.

A distinction specific to the discrete setting is the choice of gradient.
The edge difference is measured on the edge space, whereas the vertex
carr\'e-du-champ gradient combines neighboring differences in a Hilbert
fiber over each vertex. These gradients have the same $\ell^2$ energy,
but their $\ell^p$ norms need not be comparable for $p\ne2$.
Chen--Coulhon--Hua \cite{ChenCoulhonHua2020} use this distinction to obtain
edge-gradient semigroup and square-function estimates, and to construct
counterexamples for the vertex gradient when local control is absent.

The obstacle method provides a different route to endpoint estimates.
Ouyang--Spector--Stockdale \cite{OuyangSpectorStockdale2026} use a
fractional obstacle problem for the Euclidean vector Riesz transform.
In a recent preprint, Chen--Jiang--Li--Li \cite{ChenJiangLiLi2026}
obtain endpoint estimates on complete noncompact manifolds and local
Hilbertian Dirichlet spaces by an abstract obstacle decomposition. Their localization step uses the fact that a Sobolev
differential vanishes almost everywhere on a level set. Graph gradients
do not have this property: a zero value at one vertex does not prevent
a nonzero difference across an incident edge.

In this paper we prove endpoint estimates for both graph gradients by a
direct discrete argument. The edge transform requires no volume condition.
For the vertex transform, we assume only that
$\sup_x m(B(x,1))/m(x)$ is finite; no global volume doubling is required.
The obstacle decomposition follows from
\[
 \Delta^{1/2}=I-Q,
 \qquad Q=\sum_{k\ge1}c_kP^k,
 \qquad c_k>0,
 \qquad \sum_{k\ge1}c_k=1.
\]
For nonnegative real-valued $f$ and $\kappa>0$, positivity and contractivity
of $Q$ allow us to solve
$u=(1+\kappa)^{-1}(Qu+f-\lambda)_+$ by a contraction mapping.
The resulting potential is supported on a set of controlled vertex
measure. For the edge gradient, we control the measure of edges incident
to this set; for the vertex gradient, we control its one-step neighborhood.
These support estimates, together with the $\ell^2$ energy identity,
give the two weak-type bounds. The proof uses no heat-kernel estimates
or discrete-time analyticity assumption.

\section{Setting and main results}\label{sec:setting}

\subsection{The normalized Laplacian}
Let $G=(V,E,\mu)$ be an infinite connected locally finite undirected graph
without loops or multiple edges. Thus every vertex has finitely many
neighbors, and any two vertices can be joined by a finite edge path.
Write $x\sim y$ when $\{x,y\}\in E$. Assume
\[
 \mu_{xy}=\mu_{yx}\in(0,\infty)\quad(x\sim y),
 \qquad \mu_{xy}=0\quad(x\not\sim y),
 \qquad m(x)=\sum_{y\sim x}\mu_{xy}.
\]
Every combinatorial ball of finite radius is finite. Taking their union
about one vertex shows that $V$, and hence $E$, is countable.
Connectedness gives $0<m(x)<\infty$. For $F\subset V$, set
$m(F)=\sum_{x\in F}m(x)$. No lower bound on $m(x)$, and no assumption that
$m(V)=\infty$, is imposed.

For a countable set $Z$ with weights $a(z)\in(0,\infty)$, write
\[
 \ell^r(Z,a)=\left\{h:\sum_{z\in Z}|h(z)|^r a(z)<\infty\right\},
 \qquad 1\le r<\infty,
\]
with norm $\norm h_{r,a}=(\sum_z|h(z)|^r a(z))^{1/r}$.
The space $\ell^\infty(Z,a)$ consists of the bounded functions, with
$\norm h_{\infty}=\sup_z|h(z)|$. On $\ell^2(Z,a)$ we use
$\langle h,k\rangle_{2,a}=\sum_z h(z)\overline{k(z)}a(z)$.
For $Z=V$, let $c_{00}(V)$ denote the finitely supported functions.
They are dense in $\ell^r(V,m)$ for every $r<\infty$, by truncation along
an increasing exhaustion of $V$ by finite sets.
Unless specified otherwise, the Hilbert spaces below are complex; all
order inequalities and obstacle decompositions concern real-valued functions.

Define
\[
 P(x,y)=\frac{\mu_{xy}}{m(x)},\qquad
 Pf(x)=\sum_yP(x,y)f(y),\qquad \Delta=I-P.
\]
These pointwise sums are finite. We have
\begin{equation}\label{eq:reversibility}
 \begin{gathered}
 \sum_yP(x,y)=1,\qquad
 m(x)P(x,y)=\mu_{xy}=m(y)P(y,x),\\
 \sum_xm(x)P(x,y)=m(y).
 \end{gathered}
\end{equation}
In particular, $P$ preserves nonnegativity. For $1\le r<\infty$, Jensen's
inequality and Tonelli's theorem give
\[
 \sum_x|Ph(x)|^r m(x)
 \le\sum_{x,y}P(x,y)|h(y)|^r m(x)
 =\sum_y|h(y)|^r m(y).
\]
The $\ell^\infty$ contraction follows from the first identity in
\eqref{eq:reversibility}. For $h\in\ell^1(V,m)$,
\[
 \sum_{x,y}m(x)P(x,y)|h(y)|=\norm h_{1,m}<\infty,
\]
so Fubini's theorem yields
\begin{equation}\label{eq:mass-P}
 \sum_xPh(x)m(x)=\sum_xh(x)m(x).
\end{equation}
Reversibility also gives $\langle Pf,h\rangle_{2,m}
=\langle f,Ph\rangle_{2,m}$ first on $c_{00}(V)$, and then on $\ell^2$
by continuity. Thus $P$ is a self-adjoint contraction,
$\sigma(P)\subset[-1,1]$, and $\Delta$ is a bounded nonnegative
self-adjoint operator with domain $\ell^2(V,m)$ and $0\le\Delta\le2I$.

\subsection{The two gradients and their Riesz transforms}
Fix one orientation of each edge, identify $E$ with this oriented set, and put
\[
 D_Ef(e)=f(x)-f(y),\qquad \mu_E(e)=\mu_{xy}\quad(e=(x,y)\in E).
\]
Each unoriented edge is counted exactly once in the measure
$\mu_E(A)=\sum_{e\in A}\mu_E(e)$. Reversing an orientation changes only a
sign. The edge differential takes values in $\ell^2(E,\mu_E)$.

At a vertex $x$, define the finite-dimensional Hilbert space
\[
 \cH_x=\left\{\xi:\{y:y\sim x\}\longrightarrow\mathbb C\right\},
 \qquad
 \langle\xi,\eta\rangle_{\cH_x}
   =\sum_{y\sim x}P(x,y)\xi(y)\overline{\eta(y)}.
\]
The vertex differential is the section of $\cH=(\cH_x)_{x\in V}$ given by
\[
 (D_Vf)(x)(y)=\frac{f(x)-f(y)}{\sqrt2}\quad(y\sim x).
\]
Its pointwise norm is
\begin{equation}\label{eq:vertex-gradient}
 |D_Vf(x)|_{\cH_x}
  =\left(\frac1{2m(x)}\sum_{y\sim x}\mu_{xy}|f(x)-f(y)|^2\right)^{1/2}.
\end{equation}
For $1\le r<\infty$, let $\ell^r(V,m;\cH)$ be the space of sections $F$
with $\norm F_{r,m;\cH}^r=\sum_x|F(x)|_{\cH_x}^r m(x)<\infty$; for
$r=\infty$, use $\sup_x|F(x)|_{\cH_x}$.

For $f\in\ell^2(V,m)$, the inequality
$|f(x)-f(y)|^2\le2|f(x)|^2+2|f(y)|^2$ makes the following sums finite:
\begin{equation}\label{eq:energy}
 \norm{D_Ef}_{2,\mu_E}^2
 =\norm{D_Vf}_{2,m;\cH}^2
 =\frac12\sum_{x,y}\mu_{xy}|f(x)-f(y)|^2
 =\langle\Delta f,f\rangle_{2,m}.
\end{equation}
The identity follows first for $c_{00}(V)$ and then by density; alternatively,
all cross terms are absolutely summable by Cauchy--Schwarz.
Consequently $D_E$ and $D_V$ are bounded operators on their full input
space $\ell^2(V,m)$, of norm at most $\sqrt2$, and
$D_E^*D_E=D_V^*D_V=\Delta$.

Let $E_\Delta$ be the spectral resolution of $\Delta$, set
$\Pi_0=E_\Delta(\{0\})$, and write
$\rho_f(B)=\norm{E_\Delta(B)f}_{2,m}^2$ for Borel sets $B\subset[0,2]$.
The positive square root $\Delta^{1/2}$ is bounded on $\ell^2$.
The generalized inverse square root is defined by
\[
 \Delta^{-1/2}f=\int_{(0,2]}t^{-1/2}\,dE_\Delta(t)f,
\]
on the domain
\[
 \operatorname{Dom}(\Delta^{-1/2})
 =\left\{f\in\ell^2:\int_{(0,2]}t^{-1}\,d\rho_f(t)<\infty\right\};
\]
it is zero on $\ker\Delta$. To define a Riesz transform on all of
$\ell^2$, use the polar decomposition, for $D\in\{D_E,D_V\}$:
\begin{equation}\label{eq:polar}
 D=\cR_D\Delta^{1/2},\qquad
 \cR_D|_{\ker\Delta}=0,\qquad
 \norm{\cR_D f}_2=\norm{(I-\Pi_0)f}_{2,m}.
\end{equation}
Here $\cR_D$ is the partial isometry with initial space
$(\ker\Delta)^\perp$ and final space $\overline{\operatorname{Ran}D}$.
It agrees with $D\Delta^{-1/2}$ on $\operatorname{Dom}(\Delta^{-1/2})$;
throughout the paper the latter notation denotes this bounded polar
extension. Set $\cR_E=\cR_{D_E}$ and $\cR_V=\cR_{D_V}$.
The energy identity and connectedness show that $\ker\Delta$ consists of
the constant functions belonging to $\ell^2(V,m)$: it is zero if
$m(V)=\infty$, and is spanned by $1$ otherwise. Thus no mean-zero
restriction or spectral-gap assumption is needed.

For either scalar functions or Hilbert-valued sections on a weighted
countable set $(Z,a)$, define
\[
 \norm F_{\ell^{1,\infty}(Z,a)}
   =\sup_{t>0}t\,a\{z:|F(z)|>t\}.
\]
The weak space consists of the functions or sections for which this
quasi-norm is finite. For sections, $|F(z)|$ is the norm in the fiber at $z$.
An operator is of weak type $(1,1)$ with constant $C$ if this quantity
is at most $C$ times the input $\ell^1$ norm.

\Needspace{7\baselineskip}
\subsection{Endpoint estimates}
\begin{theorem}[Edge transform]\label{thm:edge}
For every real-valued $f\in\ell^1(V,m)\cap\ell^2(V,m)$ and every
$\lambda>0$,
\begin{equation}\label{eq:edge-main}
 \mu_E\{e:|\cR_Ef(e)|>\lambda\}
 \le\frac2\lambda\norm f_{1,m}.
\end{equation}
The transform has a unique bounded extension
$\ell^1(V,m;\mathbb R)\to\ell^{1,\infty}(E,\mu_E;\mathbb R)$ with the
same estimate. For $1<p\le2$, it extends boundedly from $\ell^p(V,m)$
to $\ell^p(E,\mu_E)$ with norm bounded by a constant depending only on $p$.
\end{theorem}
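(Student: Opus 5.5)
The plan is to prove \eqref{eq:edge-main} through a discrete obstacle decomposition $f=\Delta^{1/2}u+b$. The piece $u$ will be supported on a set of small vertex measure, and the piece $b$ will be bounded by $\lambda$ with $\norm b_{1,m}\le\norm f_{1,m}$. Since $\cR_E\Delta^{1/2}=D_E$ by \eqref{eq:polar}, this gives $\cR_Ef=D_Eu+\cR_Eb$.
\begin{itemize}
\item The term $D_Eu$ vanishes off edges incident to $\operatorname{supp}u$. Those edges have $\mu_E$-measure at most $m(\operatorname{supp}u)$.
\item For the term $\cR_Eb$, Chebyshev and $\norm{\cR_Eb}_2\le\norm b_{2,m}$ give the bound $\lambda^{-2}\norm b_{2,m}^2\le\lambda^{-1}\norm b_{1,m}$.
\end{itemize}
Adding the two contributions yields the constant $2$, provided $m(\operatorname{supp}u)\le\norm f_{1,m}/\lambda$. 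To make the exceptional levels add correctly one has to split at levels $\lambda'$ slightly below $\lambda$ and let $\lambda'\uparrow\lambda$.

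To build $u$, I would use $\Delta^{1/2}=I-Q$ with $Q=\sum_{k\ge1}c_kP^k$. Then $Q$ is positivity preserving, contractive on every $\ell^r(V,m)$, and mass preserving by \eqref{eq:mass-P}. Since $f$ is real but not signed, I use the soft threshold $S_\lambda(s)=\operatorname{sgn}(s)(|s|-\lambda)_+$, which is $1$-Lipschitz. For $\kappa>0$ I solve
\[
 u=(1+\kappa)^{-1}S_\lambda(Qu+f).
\]
The map on the right is a contraction with constant $(1+\kappa)^{-1}$ on $\ell^1\cap\ell^2$ with norm $\max(\norm\cdot_1,\norm\cdot_2)$, so Banach's theorem gives a fixed point. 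Put $S=\operatorname{supp}u$ and $b=f-(I-Q)u$.
\begin{itemize}
\item On $S$ we have $b=\lambda\operatorname{sgn}(u)+\kappa u$.
\item Off $S$ we have $|b|=|f+Qu|\le\lambda$.
\item On $S$ we have $|Qu+f|=\lambda+(1+\kappa)|u|$. Summing over $S$ gives
\[
 \lambda m(S)\le\sum_S\bigl(|f|+|Qu|-|u|\bigr)m.
\]
Adding $\sum_{V\setminus S}|Qu|\,m\ge0$ on the right, and using $\norm{Qu}_1\le\norm u_1$, gives $m(S)\le\norm f_{1,m}/\lambda$.
\item The same bookkeeping gives $\sum|b-\kappa u1_S|\,m\le\norm f_{1,m}+\norm{Qu}_1-\norm u_1\le\norm f_{1,m}$.
\end{itemize}

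The main obstacle is the error term $\kappa u1_S$: it has no sign control, and $\kappa\norm u_1$ is only bounded, not small. To handle it I would write $b=b_0+\kappa u$ with $b_0=b-\kappa u1_S$, so that $|b_0|\le\lambda$ and $\norm{b_0}_1\le\norm f_1$. Then $(\kappa+\Delta^{1/2})u=f-b_0$, hence
\[
 \cR_E(\kappa u)=\cR_E\,\kappa(\kappa+\Delta^{1/2})^{-1}(f-b_0).
\]
The operators $\kappa(\kappa+\Delta^{1/2})^{-1}$ are uniformly bounded by $1$ and converge strongly to $\Pi_0$ as $\kappa\to0$, and $\cR_E\Pi_0=0$. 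The vectors $b_0=b_0^{(\kappa)}$ are bounded in $\ell^2$, so I extract a pointwise and weakly convergent subsequence with limit $\beta$, which keeps the bounds $|\beta|\le\lambda$ and $\norm\beta_1\le\norm f_1$. Splitting $\kappa(\kappa+\Delta^{1/2})^{-1}(I-\Pi_0)$ into a spectral piece near $0$ and a remainder, I expect $\cR_E(\kappa u_\kappa)\to0$ pointwise along the subsequence. The hardest point is whether the uniform smallness of this spectral piece on the family $\{b_0^{(\kappa)}\}$ holds. If that fails, the fallback is to keep a $\kappa$-slack: apply Chebyshev to $\norm{\cR_E b_0}_2+\norm{\cR_E\kappa u}_2$ at level $\lambda'<\lambda$, then send $\kappa\to0$ before $\lambda'\to\lambda$. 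Pointwise convergence together with Fatou on the finite-measure level sets then gives \eqref{eq:edge-main}.

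For the remaining claims:
\begin{itemize}
\item Uniqueness of the extension $\ell^1(V,m;\mathbb R)\to\ell^{1,\infty}$ follows from density of $c_{00}$ and the standard weak-$\ell^1$ Cauchy argument. The quasi-norm satisfies $\norm{F+G}_{1,\infty}$ up to a factor $2$, and the constant $2$ is retained by passing to pointwise-a.e. limits along subsequences.
\item The $\ell^p$ bounds for $1<p\le2$ follow by Marcinkiewicz interpolation between this weak $(1,1)$ bound and the $\ell^2$ isometry bound from \eqref{eq:polar}. This is first done for real functions, and complex $f$ is handled by splitting into real and imaginary parts, at the cost of a factor depending only on $p$.
\end{itemize}
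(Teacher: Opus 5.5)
Your obstacle decomposition is correct and is essentially the paper's. Writing $b_0=b-\kappa u$, you have $f=(\Delta^{1/2}+\kappa I)u+b_0$ with $|b_0|\le\lambda$, $\norm{b_0}_{1,m}\le\norm f_{1,m}$ and $m(\operatorname{supp}u)\le\lambda^{-1}\norm f_{1,m}$. This is \cref{cor:signed} with $g=b_0$, $w=u$; your single soft-threshold fixed point is a harmless variant of solving the one-sided problems for $f^\pm$ separately. The gap is the step you flag yourself. Because you apply the unshifted $\cR_E$, you are left with the error
\[
 \kappa\cR_Eu_\kappa=\cR_EK_\kappa\bigl(f-b_0^{(\kappa)}\bigr),\qquad K_\kappa=\kappa(\kappa+\Delta^{1/2})^{-1},
\]
and you never show that it vanishes; you only ``expect'' it. Your fallback needs $\norm{\cR_EK_\kappa b_0^{(\kappa)}}_{2}\to0$, that is,
\[
 \int_{(0,2]}\Bigl(\frac{\kappa}{\kappa+\sqrt t}\Bigr)^2\,d\rho_{b_0^{(\kappa)}}(t)\longrightarrow0.
\]
This does not follow from $\sup_\kappa\norm{b_0^{(\kappa)}}_{2,m}<\infty$. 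Nothing you have established prevents the spectral measures of the moving family $b_0^{(\kappa)}$ from carrying mass of order one on $t\le\kappa^2$. Without volume or heat-kernel hypotheses, you also have no way to turn the $\ell^1$ bound into smallness of the low spectral part. As written, the decisive limit step is unproved.

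The missing idea is to let the shift act on $f$ instead of carrying it as an error. The paper applies $\cR_{E,\kappa}=D_EA_\kappa^{-1}=\cR_E(I-K_\kappa)$ to $f=A_\kappa u+b_0$. This gives exactly $\cR_{E,\kappa}f=D_Eu+\cR_{E,\kappa}b_0$ with $\norm{\cR_{E,\kappa}b_0}_2\le\norm{b_0}_{2,m}$. Hence the bound $2\lambda^{-1}\norm f_{1,m}$ holds for every $\kappa$, with no auxiliary level $\lambda'$. The only remaining $\kappa$-dependence is $\cR_{E,\kappa}f-\cR_Ef=-\cR_EK_\kappa f$ for the fixed $f$. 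This tends to $0$ in $\ell^2$ by the spectral theorem, and Fatou along $\kappa_j\to0$ finishes. In your notation this is just the regrouping $\cR_Eb_0+\kappa\cR_Eu=\cR_E(I-K_\kappa)b_0+\cR_EK_\kappa f$.

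If you prefer to keep your route, you can close it by duality. At each edge,
\[
 (\cR_EK_\kappa b_0^{(\kappa)})(e)=\mu_E(e)^{-1}\langle b_0^{(\kappa)},K_\kappa\cR_E^*\mathbf1_{\{e\}}\rangle_{2,m},
\]
and $K_\kappa\cR_E^*\mathbf1_{\{e\}}\to\Pi_0\cR_E^*\mathbf1_{\{e\}}=0$ in norm, because $\cR_E^*$ maps into $(\ker\Delta)^\perp$. This gives pointwise vanishing at every edge, uniformly over your bounded family, with no subsequence or spectral splitting needed. You must then finish with dominated convergence on the finite-measure set $\{|\cR_Ef|>\lambda\}$ rather than Fatou, since Fatou alone cannot make $\mu_E\{|\kappa\cR_Eu_\kappa|>\lambda-\lambda'\}$ small. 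Your extension and interpolation arguments are fine.
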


Let $\operatorname{dist}(x,y)$ be the least number of edges in a path
joining $x$ to $y$. For $F\ne\varnothing$, put
$\operatorname{dist}(x,F)=\inf_{y\in F}\operatorname{dist}(x,y)$, and set
\[
 N_1(F)=\{x:\operatorname{dist}(x,F)\le1\},\qquad
 N_1(\varnothing)=\varnothing,\qquad B(x,1)=N_1(\{x\}).
\]
Define the possibly infinite constant
\begin{equation}\label{eq:H1}
 H_1(G)=\sup_{x\in V}\frac{m(B(x,1))}{m(x)}.
\end{equation}
It also satisfies
\begin{equation}\label{eq:H1-sets}
 H_1(G)=\sup_{\substack{F\subset V\\0<m(F)<\infty}}
                \frac{m(N_1(F))}{m(F)}.
\end{equation}
Singletons give one inequality. For the other, countable subadditivity gives
\[
 m(N_1(F))\le\sum_{x\in F}m(B(x,1))\le H_1(G)m(F)
\]
when $H_1(G)<\infty$; if $H_1(G)=\infty$, the singleton lower bound
already proves \eqref{eq:H1-sets}.
As there are no loops,
$m(B(x,1))=m(x)+\sum_{y\sim x}m(y)$.
Thus $H_1(G)<\infty$ is exactly the one-step local doubling condition
$\sum_{y\sim x}m(y)\le C m(x)$ used in
\cite{Dungey2008,ChenCoulhonHua2020}.

\begin{theorem}[Vertex transform]\label{thm:vertex}
Suppose that $H_1(G)<\infty$. For every real-valued
$f\in\ell^1(V,m)\cap\ell^2(V,m)$ and every $\lambda>0$,
\begin{equation}\label{eq:vertex-main}
 m\{x:|\cR_Vf(x)|_{\cH_x}>\lambda\}
 \le\frac{H_1(G)+1}{\lambda}\norm f_{1,m}.
\end{equation}
The transform has a unique bounded extension from real $\ell^1(V,m)$
to the real sections of $\ell^{1,\infty}(V,m;\cH)$ with the same estimate.
For $1<p\le2$, it extends boundedly from $\ell^p(V,m)$ to
$\ell^p(V,m;\cH)$, with norm bounded in terms of $p$ and $H_1(G)$ only.
\end{theorem}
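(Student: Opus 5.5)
We follow the obstacle route sketched in the introduction. By linearity and the splitting $f=f_+-f_-$ we first reduce to nonnegative $f$, accepting a loss that must still be tracked. Write $\Delta^{1/2}=I-Q$ with $Q=\sum_{k\ge1}c_kP^k$, $c_k>0$, $\sum c_k=1$; this comes from the binomial series of $(1-s)^{1/2}$ with $s=P$. For $\lambda>0$ and a small parameter $\kappa>0$, the map $T u=(1+\kappa)^{-1}(Qu+f-\lambda)_+$ is a strict contraction on $\ell^2(V,m)$, because $Q$ is an $\ell^2$ contraction and $t\mapsto t_+$ is $1$-Lipschitz. Let $u\ge0$ be its fixed point. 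Its support $S=\{u>0\}$ satisfies $(1+\kappa)u=Qu+f-\lambda$ on $S$. Using the mass preservation \eqref{eq:mass-P} for $Q$ (each $P^k$ preserves $\sum\cdot\, m$) and the inequality $Qu\ge0$ off $S$, we sum this equation over $S$ and obtain $\kappa\norm u_{1}+\lambda m(S)\le\norm f_{1,m}$. In particular $m(S)\le\norm f_{1,m}/\lambda$. Next set $g=f-(1+\kappa)u+Qu$, i.e. $f=\Delta^{1/2}u+\kappa u+g$. On $S$ we have $g=\lambda$. Off $S$ we have $g=f+Qu\ge0$, and also $g\le\lambda$ there, since $Qu+f-\lambda\le0$ off $S$. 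Hence $0\le g\le\lambda$ everywhere, and $\norm g_{1,m}\le\norm f_{1,m}+\norm{Qu}_1-(1+\kappa)\norm u_1+\dots$, which should give $\norm g_{1,m}\le\norm f_{1,m}$ by mass conservation.

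Now decompose $\cR_Ef=D_Eu+\kappa\cR_Eu+\cR_Eg$, using $\cR_E\Delta^{1/2}u=D_Eu$. The term $D_Eu$ vanishes on every edge with both endpoints outside $S$. The edges meeting $S$ have $\mu_E$-measure at most $\sum_{x\in S}m(x)=m(S)$, because each edge is counted once and incident edge weights sum to $m(x)$. The term $\kappa\cR_Eu$ is small in $\ell^2$ and disappears as $\kappa\to0$, once we show $\kappa\norm u_2^2$ is bounded. For $\cR_Eg$ we use Chebyshev in $\ell^2$: $\mu_E\{|\cR_Eg|>\lambda\}\le\lambda^{-2}\norm g_2^2\le\lambda^{-2}\cdot\lambda\norm g_1\le\norm f_1/\lambda$. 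Splitting the threshold as $\lambda=\lambda(1-\epsilon)+\epsilon\lambda$ and letting $\epsilon,\kappa\to0$ then yields the constant $1+1=2$. The vertex case is the same, except that $D_Vu(x)\ne0$ forces $x\in N_1(S)$. We then use \eqref{eq:H1-sets} to get $m(N_1(S))\le H_1\,m(S)$, which gives the constant $H_1+1$. Signed real $f$ must avoid doubling the constant. We would handle this by running the obstacle for $f_+$ and $f_-$ simultaneously through the two-sided problem $u=(1+\kappa)^{-1}\bigl((Qu+f)-\lambda\bigr)_+-(1+\kappa)^{-1}\bigl(-(Qu+f)-\lambda\bigr)_+$. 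This map is still a contraction, and it gives $|g|\le\lambda$ and $\lambda m(S)+\kappa\norm u_1\le\norm f_1$ by the same summation argument with signs.

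The main obstacle we expect is the summation step and the bookkeeping behind it. We need $\norm g_{1}\le\norm f_1$, or more precisely $\lambda m(S)+\lambda^{-1}\norm g_2^2\le\norm f_1$ jointly and not each separately, because that joint bound is what gives total constant $2$ rather than $3$. Our plan is to write $\norm g_2^2\le\lambda\bigl(\lambda m(S)+\sum_{S^c}|g|m\bigr)$ and to show $\lambda m(S)+\sum_{S^c}|g|m\le\norm f_1$ directly, via $\sum_S(f-\lambda)m\ge\sum_S\bigl((1+\kappa)u-Qu\bigr)m$ and mass conservation of $Q$. A second point is that $u\in\ell^1$: we would prove it by running the contraction in $\ell^1\cap\ell^2$, since $Q$ is contractive on both spaces. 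Finally, the extension to $\ell^1$ follows by density and the standard quasi-norm limiting argument. The $\ell^p$ bounds for $1<p\le2$ follow by Marcinkiewicz interpolation between the weak $(1,1)$ bound and the $\ell^2$ isometry, first for real inputs and then for complex inputs by splitting into real and imaginary parts.
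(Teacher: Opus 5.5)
You have the obstacle construction and the support and Chebyshev bookkeeping right, and they match the paper: the one-sided fixed point in $\ell^1\cap\ell^2$, $0\le g\le\lambda$, $\lambda m(S)\le\norm f_{1,m}$, $\norm g_{2,m}^2\le\lambda\norm f_{1,m}$, and $D_Vu=0$ off $N_1(S)$ with $m(N_1(S))\le H_1(G)m(S)$. The gap is in the passage $\kappa\to0$. You apply the unshifted transform $\cR_V$ to $f=\Delta^{1/2}u_\kappa+\kappa u_\kappa+g_\kappa$, which leaves the error term $\kappa\cR_Vu_\kappa$. You then make its disappearance depend on $\sup_\kappa\kappa\norm{u_\kappa}_{2,m}^2<\infty$, which you never prove. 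Nothing in the construction supplies it, because $u_\kappa$, $g_\kappa$ and $S_\kappa$ all move with $\kappa$. The estimates you actually have are $\kappa\norm{u_\kappa}_{1,m}\le\norm f_{1,m}$ and $\norm{u_\kappa}_{2,m}\le\kappa^{-1}\norm{f-g_\kappa}_{2,m}$. These make $\kappa u_\kappa$ bounded in $\ell^2$ but not small, and they give only $\kappa\norm{u_\kappa}_{2,m}^2=O(\kappa^{-1})$. So, as written, the $\kappa$-term is not controlled and the threshold-splitting argument cannot be closed.

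The paper avoids this by applying the shifted operator $\cR_{V,\kappa}=D_VA_\kappa^{-1}$, whose $\ell^2$ norm is at most $1$, to $f=g+A_\kappa w$. This gives the exact identity $\cR_{V,\kappa}f=\cR_{V,\kappa}g+D_Vw$, with no error term. Hence $m\{|\cR_{V,\kappa}f|_{\cH}>\lambda\}\le m(N_1(\Omega))+\lambda^{-2}\norm g_{2,m}^2\le(H_1(G)+1)\lambda^{-1}\norm f_{1,m}$ uniformly in $\kappa$, with the full threshold used for both pieces. The only limit taken is $\cR_{V,\kappa}f\to\cR_Vf$ for the fixed input $f$. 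This is purely spectral, since the multiplier $\kappa/(\sqrt t+\kappa)$ tends to $0$ on $(0,2]$, and it is followed by fiberwise convergence and Fatou. No information about $w$, $g$ or $\Omega$ as $\kappa\to0$ is needed.

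You can also repair your own decomposition directly. $\cR_V$ vanishes on $\ker\Delta$, and $\kappa A_\kappa^{-1}(I-\Pi_0)\to0$ strongly. Pairing $\kappa\cR_Vu_\kappa=\cR_V\kappa A_\kappa^{-1}(f-g_\kappa)$ with a fixed output vector $\psi$ gives $\langle f-g_\kappa,\kappa A_\kappa^{-1}(I-\Pi_0)\cR_V^*\psi\rangle\to0$, because $f-g_\kappa$ is bounded in $\ell^2$. So $\kappa\cR_Vu_\kappa\to0$ in each finite-dimensional fiber, and Fatou applied pointwise removes the term, with no $\ell^2$ smallness or threshold splitting needed.

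Two smaller points. First, splitting $f=f^+-f^-$ and solving the one-sided problem for each part at the same height does not double the constant; this is what the paper's signed decomposition does. Your soft-threshold two-sided problem also works, via $|Qu+f|=|g|+(1+\kappa)|u|$. Second, the separate bounds $\lambda m(S)\le\norm f_{1,m}$ and $\norm g_{2,m}^2\le\lambda\norm f_{1,m}$ already give the constant $H_1(G)+1$, so no joint bound is needed.
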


The stated endpoint constants concern real-valued inputs. The strong
$\ell^p$ conclusions hold also for complex-valued inputs, as proved at
the end of \cref{sec:proofs}. Both $\ell^2$ operator norms are at most $1$.

\section{A discrete obstacle decomposition}\label{sec:obstacle}

\begin{lemma}[Square-root expansion]\label[lemma]{lem:Q}
For $k\ge1$, set
\[
 c_k=\frac1{2k-1}\frac{\binom{2k}{k}}{4^k}.
\]
Then $c_k>0$, $\sum_{k\ge1}c_k=1$, and
\[
 Q=\sum_{k\ge1}c_kP^k
\]
converges in operator norm on each $\ell^r(V,m)$, $1\le r\le\infty$.
These operators agree on the intersections of their domains, preserve
nonnegativity, and are contractions. On $\ell^1$, $Q$ preserves mass;
on $\ell^2$,
\begin{equation}\label{eq:sqrt}
 \Delta^{1/2}=I-Q.
\end{equation}
\end{lemma}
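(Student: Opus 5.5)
We start from the scalar binomial series. For $|s|\le1$,
\[
 \sqrt{1-s}=\sum_{k\ge0}\binom{1/2}{k}(-s)^k=1-\sum_{k\ge1}c_ks^k ,
\]
since $(-1)^{k+1}\binom{1/2}{k}=\frac{1}{2k-1}\binom{2k}{k}4^{-k}$; this is the standard identity $\binom{1/2}{k}=\frac{(-1)^{k+1}}{2k-1}\binom{2k}{k}4^{-k}$, which one checks by induction on $k$ from the ratio of consecutive coefficients. Positivity of $c_k$ is then immediate. For the sum of the $c_k$, the partial sums of the nonnegative series $\sum c_ks^k$ are bounded by $1-\sqrt{1-s}\le1$ for $0\le s<1$. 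Hence $\sum c_k\le1$ and the series converges on $[-1,1]$. Abel's theorem, or monotone convergence as $s\uparrow1$, then gives $\sum c_k=1-\sqrt{0}=1$. Since $\sum c_k<\infty$, the series converges absolutely and uniformly on $[-1,1]$ to $1-\sqrt{1-s}$. Alternatively, Stirling's bound $c_k\sim k^{-3/2}/(2\sqrt\pi)$ gives the summability directly.

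Next come the operator properties. From Section~2, $P$ is a positivity-preserving contraction on each $\ell^r(V,m)$, $1\le r\le\infty$. Hence $\|c_kP^k\|\le c_k$, and $Q$ converges absolutely in operator norm with $\|Q\|\le\sum c_k=1$. Positivity passes to each finite positive combination $\sum_{k\le K}c_kP^k$. Norm limits preserve the pointwise inequality $Qf\ge0$, because convergence in any $\ell^r$ norm implies pointwise convergence, the weights $m(x)$ being positive. The same pointwise observation shows that the operators agree on overlaps: $P^kf$ is given by the same pointwise formula for every $r$, so both partial sums and limits coincide pointwise on $\ell^r\cap\ell^{r'}$. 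Mass preservation on $\ell^1$ follows from \eqref{eq:mass-P}, iterated to give $\sum_xP^kh\,m=\sum_xh\,m$. The functional $h\mapsto\sum_x h(x)m(x)$ is bounded on $\ell^1$, so it commutes with the norm-convergent series and returns $\sum_kc_k\sum_xh\,m=\sum_xh\,m$.

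Finally, we prove \eqref{eq:sqrt} on $\ell^2$ with the spectral theorem. We have $\Delta=I-P$ with $\sigma(P)\subset[-1,1]$. The polynomial functional calculus gives $\sum_{k\le K}c_kP^k=p_K(P)$ with $p_K(s)=\sum_{k\le K}c_ks^k$, and $\|p_K(P)-g(P)\|\le\sup_{[-1,1]}|p_K-g|\to0$, where $g(s)=1-\sqrt{1-s}$. Thus $Q=g(P)$, and so $I-Q=\sqrt{1-P}$ in the continuous functional calculus. This operator is nonnegative and its square is $I-P=\Delta$. By uniqueness of the positive square root, $I-Q=\Delta^{1/2}$.

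The only mildly delicate step is $\sum c_k=1$ together with uniform convergence up to the endpoints $s=\pm1$, where $\sqrt{1-s}$ is not analytic. The positivity and monotone-limit argument above handles this cleanly, and everything else is routine.
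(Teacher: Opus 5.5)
Your proposal is correct and follows the paper's proof essentially step by step. Both use the binomial series with monotone convergence as $s\uparrow1$ to get $\sum_k c_k=1$, uniform convergence on $[-1,1]$, operator-norm tail bounds, pointwise identification via continuous coordinate evaluation, continuity of the mass functional, and the continuous functional calculus on $\sigma(P)\subset[-1,1]$; your appeal to uniqueness of the positive square root just makes explicit the identification of $\sqrt{I-P}$ with $\Delta^{1/2}$ that the paper leaves implicit.
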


\begin{proof}
The binomial expansion gives
\[
 \sqrt{1-z}=1-\sum_{k\ge1}c_kz^k\qquad(|z|<1).
\]
Letting the real variable $z\uparrow1$, monotone convergence of the
nonnegative terms yields $\sum_kc_k=1$. The Weierstrass test therefore
gives uniform absolute convergence on $[-1,1]$, including $-1$ and $1$;
continuity extends the displayed scalar identity to both endpoints.
For $Q_N=\sum_{k=1}^Nc_kP^k$,
\begin{equation}\label{eq:Q-tail}
 \norm{Q-Q_N}_{\ell^r\to\ell^r}\le\sum_{k>N}c_k\longrightarrow0.
\end{equation}
Indeed, the partial sums are Cauchy in operator norm by the contraction
bounds for $P^k$. Their limits preserve nonnegativity, and have norm at
most $\sum_kc_k=1$. For $r<\infty$, coordinate evaluation satisfies
$|h(x)|\le m(x)^{-1/r}\norm h_{r,m}$; for $r=\infty$ it is also
continuous. Hence the operator limits on any two $\ell^r$ spaces agree
pointwise on their intersection.
For $h\in\ell^1$, the mass functional $h\mapsto\sum_xh(x)m(x)$ is
continuous. Since $P^k$ preserves mass,
\begin{equation}\label{eq:mass-Q}
 \sum_xQh(x)m(x)
 =\lim_{N\to\infty}\left(\sum_{k=1}^Nc_k\right)\sum_xh(x)m(x)
 =\sum_xh(x)m(x).
\end{equation}
Finally, continuous functional calculus on $\sigma(P)\subset[-1,1]$
applied to the uniform scalar expansion gives \eqref{eq:sqrt}.
\end{proof}

Fix $\kappa>0$ and define
\begin{equation}\label{eq:Akappa}
 A_\kappa=\Delta^{1/2}+\kappa I=(1+\kappa)I-Q
 \quad\text{on }\ell^2.
\end{equation}
The last expression defines a compatible bounded operator on every
$\ell^r$. Its inverse exists there by the operator-norm convergent
Neumann series
\begin{equation}\label{eq:Neumann}
 A_\kappa^{-1}
 =\frac1{1+\kappa}\sum_{j=0}^\infty
                 \left(\frac{Q}{1+\kappa}\right)^j,
 \qquad \norm{A_\kappa^{-1}}_{\ell^r\to\ell^r}\le\kappa^{-1}.
\end{equation}
On $\ell^2$, this is also the spectral inverse of
$\Delta^{1/2}+\kappa I$. For a real scalar $s$, write $s_+=\max\{s,0\}$.

\begin{proposition}[Positive decomposition]\label[proposition]{prop:positive}
Let $0\le f\in\ell^1(V,m)\cap\ell^2(V,m)$ and $\lambda,\kappa>0$.
There are $u,b\in\ell^1(V,m)\cap\ell^2(V,m)$ such that
\begin{equation}\label{eq:positive-dec}
 f=A_\kappa u+b,\qquad
 u\ge0,\qquad 0\le b\le\lambda,\qquad
 b=\lambda\ \text{on }\{u>0\}.
\end{equation}
In particular $b\in\ell^\infty$, and
\begin{equation}\label{eq:mass-id}
 \norm b_{1,m}+\kappa\norm u_{1,m}=\norm f_{1,m},
 \qquad \lambda m(\{u>0\})\le\norm f_{1,m}.
\end{equation}
\end{proposition}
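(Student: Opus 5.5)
We construct $u$ as the fixed point of the map
\[
 T u=(1+\kappa)^{-1}(Qu+f-\lambda)_+,
\]
following the introduction, and then set $b=f-A_\kappa u$. We work in the real Banach space $X=\ell^1(V,m)\cap\ell^2(V,m)$ with norm $\norm{\cdot}_{1,m}+\norm{\cdot}_{2,m}$; this space is complete.

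\textbf{Step 1: $T$ maps $X$ into $X$.} Let $u\in X$. By \cref{lem:Q}, $Qu\in X$. Since $f\ge0$, we have $0\le(Qu+f-\lambda)_+\le|Qu|+f$ pointwise. Hence $Tu\in X$, and $Tu\ge0$.

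\textbf{Step 2: $T$ is a contraction.} The map $s\mapsto s_+$ is $1$-Lipschitz, so
\[
 |Tu-Tv|\le(1+\kappa)^{-1}|Q(u-v)|
\]
pointwise. \cref{lem:Q} says $Q$ is a contraction on $\ell^1$ and on $\ell^2$. Therefore $T$ is a contraction on $X$ with constant $(1+\kappa)^{-1}<1$. Banach's theorem gives a unique fixed point $u\in X$, and $u\ge0$.

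\textbf{Step 3: verify the decomposition.} Set $b=f-A_\kappa u=f+Qu-(1+\kappa)u$, which lies in $X$. We check the three conditions in \eqref{eq:positive-dec} pointwise. Write $s=Qu+f-\lambda$, so $(1+\kappa)u=s_+$ and $b=s+\lambda-s_+$.
\begin{itemize}
\item On $\{u>0\}$ we have $s>0$, so $s_+=s$ and $b=\lambda$.
\item On $\{u=0\}$ we have $s\le0$, so $b=Qu+f$. This is $\ge0$ because $Q$ preserves nonnegativity and $f\ge0$. It is $\le\lambda$ because $s\le0$.
\end{itemize}
Thus $0\le b\le\lambda$ everywhere, and in particular $b\in\ell^\infty$.

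\textbf{Step 4: mass identity.} Apply the mass functional to $f=(1+\kappa)u-Qu+b$. By \eqref{eq:mass-Q}, $Qu$ and $u$ have the same mass, so the mass of $f$ equals $\kappa\sum_x u(x)m(x)+\sum_x b(x)m(x)$. All three functions $f,u,b$ are nonnegative, so these masses are their $\ell^1$ norms. This gives the first identity in \eqref{eq:mass-id}.

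For the second, $b=\lambda$ on $\{u>0\}$, so
\[
 \lambda\, m(\{u>0\})\le\norm b_{1,m}\le\norm f_{1,m}.
\]

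The only delicate point is that $X$ is complete and that $Q$ contracts both norms, which is exactly what \cref{lem:Q} supplies. The other possible pitfall is the sign check for $b\ge0$ on $\{u=0\}$, and that follows from the positivity of $Q$.
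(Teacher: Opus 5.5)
Your proof is correct and follows the paper's argument: a Banach fixed point for $v\mapsto(1+\kappa)^{-1}(Qv+f-\lambda)_+$ in $\ell^1\cap\ell^2$, then $b=\min\{Qu+f,\lambda\}$, and the mass identity from mass preservation of $Q$. The only cosmetic difference is that you run the contraction on all of $X$, using the bound $|Qu|+f$, and read off $u\ge0$ from the fixed-point equation, whereas the paper restricts the map to the closed positive cone $X_+$.
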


\begin{proof}
Use the real Banach space
$X=\ell^1(V,m;\mathbb R)\cap\ell^2(V,m;\mathbb R)$ with norm
$\norm v_X=\norm v_{1,m}+\norm v_{2,m}$.
For completeness, an $X$-Cauchy sequence converges separately in
$\ell^1$ and $\ell^2$; continuous coordinate evaluation identifies the
two limits, giving convergence in $X$. Its positive cone $X_+$ is closed
for the same reason.
On $X_+$ put
\[
 \Phi(v)=\frac1{1+\kappa}(Qv+f-\lambda)_+.
\]
The expression is taken pointwise. It belongs to $X_+$ even if the
constant function $\lambda$ is not integrable, since
\[
 0\le(Qv+f-\lambda)_+\le Qv+f.
\]
The positive-part map is $1$-Lipschitz, so
\[
 \norm{\Phi(v)-\Phi(z)}_X
 \le\frac1{1+\kappa}\norm{Q(v-z)}_X
 \le\frac1{1+\kappa}\norm{v-z}_X.
\]
The contraction mapping theorem yields a unique fixed point $u\in X_+$.
More explicitly, with $u_0=0$ and $u_{j+1}=\Phi(u_j)$,
\begin{equation}\label{eq:iteration-convergence}
 \norm{u-u_j}_X
 \le\frac{(1+\kappa)^{-j}}{\kappa}\norm f_X
 \longrightarrow0.
\end{equation}
This follows from the geometric-series bound for successive iterates
and $\norm{u_1}_X\le(1+\kappa)^{-1}\norm f_X$.
The limit is pointwise at every vertex and satisfies
\begin{equation}\label{eq:fixed-point}
 (1+\kappa)u=(Qu+f-\lambda)_+.
\end{equation}
Define $b=f-A_\kappa u$. Then $b\in X$ and
\[
 b=Qu+f-(Qu+f-\lambda)_+=\min\{Qu+f,\lambda\}.
\]
Thus $0\le b\le\lambda$, and $b=\lambda$ at each vertex where $u>0$.
All terms in $f=(I-Q)u+\kappa u+b$ are absolutely summable with respect
to $m$. Summing this equality and applying \eqref{eq:mass-Q} gives
\[
 \norm f_{1,m}=\kappa\norm u_{1,m}+\norm b_{1,m}.
\]
Since $b=\lambda$ on $\{u>0\}$, this also proves the support bound in
\eqref{eq:mass-id}. No limit in $\kappa$ is taken in this construction.
\end{proof}

\begin{corollary}[Signed decomposition]\label[corollary]{cor:signed}
For every real-valued $f\in\ell^1(V,m)\cap\ell^2(V,m)$ and every
$\lambda,\kappa>0$, there are $g,w\in\ell^1\cap\ell^2$ and
$\Omega\subset V$ such that
\begin{equation}\label{eq:signed-dec}
 f=g+A_\kappa w,
\end{equation}
\begin{equation}\label{eq:good}
 |g|\le\lambda,\qquad
 \norm g_{1,m}\le\norm f_{1,m},\qquad
 \norm g_{2,m}^2\le\lambda\norm f_{1,m},
\end{equation}
and
\begin{equation}\label{eq:support}
 w=0\ \text{on }V\setminus\Omega,\qquad
 m(\Omega)\le\lambda^{-1}\norm f_{1,m}.
\end{equation}
\end{corollary}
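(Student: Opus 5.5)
The plan is to split $f=f_+-f_-$ with $f_\pm=\max\{\pm f,0\}$, apply \cref{prop:positive} to each part with the same $\lambda,\kappa$, and subtract. Both $f_\pm$ are nonnegative and lie in $\ell^1(V,m)\cap\ell^2(V,m)$, and $\norm{f_+}_{1,m}+\norm{f_-}_{1,m}=\norm f_{1,m}$. The proposition gives $f_\pm=A_\kappa u_\pm+b_\pm$ with $u_\pm\ge0$, $0\le b_\pm\le\lambda$, $b_\pm=\lambda$ on $\{u_\pm>0\}$, and the identities \eqref{eq:mass-id}. We then set
\[
 g=b_+-b_-,\qquad w=u_+-u_-,\qquad \Omega=\{u_+>0\}\cup\{u_->0\}.
\]
Linearity of $A_\kappa$ on $\ell^2$ gives \eqref{eq:signed-dec}. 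Since $w$ vanishes off $\Omega$, the first part of \eqref{eq:support} holds, and $g,w\in\ell^1\cap\ell^2$.

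The bounds for $g$ come from the sign structure. Since $0\le b_\pm\le\lambda$, the difference satisfies $|b_+-b_-|\le\max\{b_+,b_-\}\le\lambda$, so $|g|\le\lambda$. Next, $|g|\le b_++b_-$, and \eqref{eq:mass-id} gives $\norm{b_\pm}_{1,m}\le\norm{f_\pm}_{1,m}$; summing yields $\norm g_{1,m}\le\norm f_{1,m}$. For the $\ell^2$ bound, combine the two: $|g|^2\le\lambda|g|$, hence $\norm g_{2,m}^2\le\lambda\norm g_{1,m}\le\lambda\norm f_{1,m}$.

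For the measure of $\Omega$, subadditivity and the support bound in \eqref{eq:mass-id} give
\[
 m(\Omega)\le m(\{u_+>0\})+m(\{u_->0\})\le\lambda^{-1}\bigl(\norm{f_+}_{1,m}+\norm{f_-}_{1,m}\bigr)=\lambda^{-1}\norm f_{1,m}.
\]
This completes \eqref{eq:support}.

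No step is a genuine obstacle. The only points needing care are that the two positive decompositions are carried out independently, so their supports may overlap, and that subadditivity suffices for the measure estimate. The bound $|g|\le\lambda$ uses that $b_\pm$ are both nonnegative, rather than only the cruder estimate $|g|\le 2\lambda$.
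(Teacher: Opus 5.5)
Your proposal is correct and follows the paper's proof essentially verbatim: you split $f$ into positive and negative parts, apply \cref{prop:positive} to each with the same $\lambda,\kappa$, set $g=b_+-b_-$, $w=u_+-u_-$, $\Omega=\{u_+>0\}\cup\{u_->0\}$, and obtain the bounds from $0\le b_\pm\le\lambda$, \eqref{eq:mass-id}, $\norm g_{2,m}^2\le\lambda\norm g_{1,m}$, and subadditivity. The paper argues the same way.
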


\begin{proof}
Let $f^+=\max\{f,0\}$ and $f^-=\max\{-f,0\}$.
Apply \cref{prop:positive} to $f^+$ and $f^-$ with the same height
$\lambda$ and shift $\kappa$, and write
$f^\pm=A_\kappa u_\pm+b_\pm$. Define
\[
 w=u_+-u_-,\qquad g=b_+-b_-,\qquad
 \Omega=\{u_+>0\}\cup\{u_->0\}.
\]
Then \eqref{eq:signed-dec} holds. As both $b_+$ and $b_-$ lie in
$[0,\lambda]$, their difference has absolute value at most $\lambda$.
Also,
\[
 \norm g_{1,m}\le\norm{b_+}_{1,m}+\norm{b_-}_{1,m}
 \le\norm{f^+}_{1,m}+\norm{f^-}_{1,m}=\norm f_{1,m}.
\]
Hence $\norm g_{2,m}^2\le\norm g_\infty\norm g_{1,m}
\le\lambda\norm f_{1,m}$. Finally,
\[
 \lambda m(\Omega)
 \le\lambda m(\{u_+>0\})+\lambda m(\{u_->0\})
 \le\norm f_{1,m},
\]
and $w$ is zero outside $\Omega$ by construction.
\end{proof}

\section{Proofs of the endpoint estimates}\label{sec:proofs}

\subsection{The shifted transforms and their limit}
For $D\in\{D_E,D_V\}$ define $\cR_{D,\kappa}=DA_\kappa^{-1}$ on
$\ell^2(V,m)$. Equation~\eqref{eq:polar} gives
\begin{equation}\label{eq:shifted}
 \cR_{D,\kappa}
 =\cR_D\Delta^{1/2}(\Delta^{1/2}+\kappa I)^{-1},
 \qquad \norm{\cR_{D,\kappa}}_{2\to2}\le1.
\end{equation}
Here and below, an output norm without subscripts uses the corresponding
edge or vertex Hilbert space. The norm bound follows from $0\le\sqrt t/(\sqrt t+\kappa)\le1$ for
$t\in[0,2]$. By the partial-isometry identity in \eqref{eq:polar},
\begin{equation}\label{eq:spectral-error}
 \norm{\cR_{D,\kappa}f-\cR_Df}_2^2
 =\int_{(0,2]}\left(\frac{\kappa}{\sqrt t+\kappa}\right)^2d\rho_f(t)
 \longrightarrow0.
\end{equation}
Dominated convergence applies because $\rho_f$ is a finite measure,
the integrand is at most $1$, and it tends to zero for $t>0$.
The zero spectral subspace is omitted because both transforms vanish
there. This proves strong, not necessarily operator-norm, convergence:
\begin{equation}\label{eq:strong-limit}
 \cR_{D,\kappa}f\longrightarrow\cR_Df
 \quad\text{in the output }\ell^2\text{ space}.
\end{equation}
For an output atom $z$ of measure $a(z)>0$,
\[
 |F(z)|\le a(z)^{-1/2}\norm F_2,
\]
with the fiber norm used in the vertex case. Thus \eqref{eq:strong-limit}
also gives convergence at every edge, or in every vertex fiber, for the
whole family $\kappa\downarrow0$. The real coefficients in
\eqref{eq:Neumann} show that $A_\kappa^{-1}$ preserves real-valued
functions. Since both differentials do so as well, this pointwise limit
also shows that $\cR_E$ and $\cR_V$ preserve the real subspaces.

\subsection{The edge transform}
For $F\subset V$ let
\[
 E(F)=\{\{x,y\}\in E:x\in F\text{ or }y\in F\}.
\]
If $u=0$ on $V\setminus F$, then $D_Eu=0$ on $E\setminus E(F)$.
Moreover, Tonelli's theorem gives
\begin{equation}\label{eq:edge-support}
 \mu_E(E(F))\le\sum_{x\in F}\sum_{y\sim x}\mu_{xy}=m(F).
\end{equation}
An edge with both endpoints in $F$ is counted twice on the right and once
on the left, so this inequality holds also for infinite $F$.

\begin{proof}[Proof of the endpoint bound in \cref{thm:edge}]
Fix real $f\in\ell^1\cap\ell^2$ and $\lambda,\kappa>0$.
Apply \cref{cor:signed} at height $\lambda$ to obtain
$f=g+A_\kappa w$. All terms belong to $\ell^2$, so bounded operator
composition and \eqref{eq:Neumann} give
\[
 \cR_{E,\kappa}f=\cR_{E,\kappa}g+D_Ew.
\]
The second term vanishes on $E\setminus E(\Omega)$. Therefore
\begin{align*}
 \mu_E\{|\cR_{E,\kappa}f|>\lambda\}
 &\le\mu_E(E(\Omega))+\mu_E\{|\cR_{E,\kappa}g|>\lambda\}\\
 &\le m(\Omega)+\lambda^{-2}\norm{\cR_{E,\kappa}g}_{2,\mu_E}^2\\
 &\le\lambda^{-1}\norm f_{1,m}
        +\lambda^{-2}\norm g_{2,m}^2
 \le\frac2\lambda\norm f_{1,m}.
\end{align*}
This bound is independent of $\kappa$.
Take $\kappa_j=1/j$. Pointwise convergence from
\eqref{eq:strong-limit} implies
\[
 \mathbf1_{\{|\cR_Ef|>\lambda\}}
 \le\liminf_{j\to\infty}
       \mathbf1_{\{|\cR_{E,\kappa_j}f|>\lambda\}}.
\]
Fatou's lemma, summed with the edge weights, proves
\eqref{eq:edge-main}. Neither the potentials $w$ nor their support sets
are required to converge as $\kappa\downarrow0$.
\end{proof}

\subsection{The vertex transform}
If $u=0$ on $V\setminus F$ and $x\notin N_1(F)$, then both $x$ and all
of its neighbors lie outside $F$. Thus
\begin{equation}\label{eq:vertex-support}
 D_Vu(x)=0\qquad(x\notin N_1(F)).
\end{equation}

\begin{proof}[Proof of the endpoint bound in \cref{thm:vertex}]
Again fix real $f\in\ell^1\cap\ell^2$ and $\lambda,\kappa>0$, and write
$f=g+A_\kappa w$ as in \cref{cor:signed}. In the output Hilbert space,
\[
 \cR_{V,\kappa}f=\cR_{V,\kappa}g+D_Vw.
\]
The second term is zero outside $N_1(\Omega)$. Using
\eqref{eq:H1-sets} and \eqref{eq:shifted}, we obtain
\begin{align*}
 m\{|\cR_{V,\kappa}f|_{\cH}>\lambda\}
 &\le m(N_1(\Omega))+m\{|\cR_{V,\kappa}g|_{\cH}>\lambda\}\\
 &\le H_1(G)m(\Omega)
      +\lambda^{-2}\norm{\cR_{V,\kappa}g}_{2,m;\cH}^2\\
 &\le\frac{H_1(G)}\lambda\norm f_{1,m}
      +\lambda^{-2}\norm g_{2,m}^2\\
 &\le\frac{H_1(G)+1}\lambda\norm f_{1,m}.
\end{align*}
Here $|F|_{\cH}$ denotes the function $x\mapsto|F(x)|_{\cH_x}$.
If $\Omega=\varnothing$ the same argument applies with
$N_1(\Omega)=\varnothing$. Otherwise $0<m(\Omega)<\infty$, as required
in \eqref{eq:H1-sets}.
Fiberwise convergence in \eqref{eq:strong-limit} and Fatou's lemma give
\eqref{eq:vertex-main}, exactly as in the edge case.
\end{proof}

\subsection{Endpoint extension and strong estimates}\label{sec:extension}
We give the details for the two different output spaces simultaneously.
Let $(Y,\nu;\mathcal K)$ be either the weighted edge set with scalar
fibers or $(V,m;\cH)$. Let $T$ be the corresponding transform on
$\ell^2(V,m)$, and let $C=2$ or $C=H_1(G)+1$, respectively.
We have proved, for real $h\in\ell^1\cap\ell^2$,
\begin{equation}\label{eq:weak-core}
 \nu\{|Th|>t\}\le\frac C t\norm h_{1,m},\qquad
 \norm{Th}_2\le\norm h_{2,m}.
\end{equation}

Write $\nu(y)=\nu(\{y\})>0$ for the weight of an output atom.
Fix $y$. For every $0<t<|Th(y)|$ the first inequality in
\eqref{eq:weak-core} gives $t\nu(y)\le C\norm h_{1,m}$.
Letting $t\uparrow|Th(y)|$, or treating $Th(y)=0$ separately, yields
\begin{equation}\label{eq:atom-bound}
 |Th(y)|\le\frac C{\nu(y)}\norm h_{1,m}.
\end{equation}
For real $f\in\ell^1(V,m)$, choose an increasing finite exhaustion
$F_j\uparrow V$ and set $f_j=f\mathbf1_{F_j}$. Then
$f_j\in c_{00}(V)$ and $f_j\to f$ in $\ell^1$.
Equation~\eqref{eq:atom-bound}, applied to differences, shows that
$Tf_j(y)$ is Cauchy in the complete fiber $\mathcal K_y$ for every $y$.
Define $T_1f(y)=\lim_jTf_j(y)$. The same bound for differences shows that
the limit is independent of the approximating sequence in
$\ell^1\cap\ell^2$, and that $T_1$ is linear. If
$f\in\ell^1\cap\ell^2$, then $f_j\to f$ also in $\ell^2$, so the
$\ell^2$ boundedness and coordinate evaluation identify $T_1f$ with $Tf$.
Pointwise convergence and Fatou's lemma give
\begin{equation}\label{eq:weak-extended}
 \nu\{|T_1f|>t\}\le\frac C t\norm f_{1,m}.
\end{equation}
Applying this estimate to differences proves continuity into weak
$\ell^1$. Any bounded weak-type extension has the coordinate continuity
\eqref{eq:atom-bound}, possibly with its own bound; density then proves
uniqueness.

For completeness, the strong estimate can be obtained directly from the
usual distribution-function proof of Marcinkiewicz interpolation
\cite{Stein1970}. For real $f\in c_{00}(V)$ and $t>0$, set
$f_{>t}=f\mathbf1_{\{|f|>t\}}$ and
$f_{\le t}=f\mathbf1_{\{|f|\le t\}}$.
Linearity of $T$ and the triangle inequality in each output fiber yield
\[
 \nu\{|Tf|>t\}
 \le\frac{2C}{t}\sum_{|f(x)|>t}|f(x)|m(x)
    +\frac4{t^2}\sum_{|f(x)|\le t}|f(x)|^2m(x).
\]
For $1<p<2$, the layer-cake formula and Tonelli's theorem therefore give
\begin{align}
 \norm{Tf}_{p,\nu;\mathcal K}^p
 &=p\int_0^\infty t^{p-1}\nu\{|Tf|>t\}\,dt\notag\\
 &\le\left(\frac{2Cp}{p-1}+\frac{4p}{2-p}\right)\norm f_{p,m}^p.
 \label{eq:explicit-interpolation}
\end{align}
Indeed, the two inner integrals are respectively
$\int_0^{|f(x)|}t^{p-2}dt=|f(x)|^{p-1}/(p-1)$ and
$\int_{|f(x)|}^{\infty}t^{p-3}dt=|f(x)|^{p-2}/(2-p)$ for
$f(x)\ne0$; zero values contribute zero before integration.
Density of $c_{00}(V)$ gives a unique bounded extension $T_p$ to
$\ell^p(V,m)$. For $f\in\ell^p\cap\ell^2$, the finite truncations
converge in both norms; their output limits agree at every atom, so
$T_p$ agrees with the spectral $\ell^2$ operator on this intersection.
For $f\in\ell^p\cap\ell^1$, the same argument identifies $T_p$ with
$T_1$. This verifies the consistency of all extensions without assuming
any inclusion between the different weighted $\ell^r$ spaces.
At $p=2$, use \eqref{eq:polar} directly instead of
\eqref{eq:explicit-interpolation}.

Finally, for $f=a+ib$ with real $a,b$, define $T_pf=T_pa+iT_pb$.
For $1<p<2$ the real estimate and $\norm a_p,\norm b_p\le\norm f_p$
give the complex estimate with at most twice the real strong-type bound.
It agrees on $\ell^p\cap\ell^2$ with the complex spectral transform.
At $p=2$ the norm remains at most $1$ by the complex energy identity.
This completes the extension and strong-type assertions of both theorems.

\section*{Acknowledgment of AI assistance}
The author used ChatGPT in this work. The author takes full responsibility for the content of the paper.

\end{document}